
\documentclass[a4paper,12pt, twoside, reqno]{amsart}
\usepackage{amsmath}


\parindent0.7cm

\numberwithin{equation}{section}

\newtheorem{theorem}{Theorem}

\newtheorem{lem}{Lemma}

\newtheorem{defn}{Definition}
\newtheorem{ex}{Example}
\newtheorem{rem}{Remark}

\begin{document}
\title{Coupled solutions for a bivariate weakly nonexpansive operator  by iterations}
\author{V. Berinde, A. R. Khan and M. P\u acurar}

\begin{abstract}
We prove weak and strong convergence theorems for a double Krasnoselskij type iterative method to approximate coupled solutions of  a bivariate nonexpansive   operator $F:C\times C\rightarrow C$, where $C$ is a nonempty closed and convex subset of a Hilbert space. The new convergence theorems generalize, extend, improve and complement very important old and recent results in coupled fixed point theory.
Some appropriate examples to illustrate  our new results and their generalization are also given.
\end{abstract}

\maketitle

\pagestyle{myheadings}
\markboth{V. Berinde, A. R. Khan and M. P\u acurar}{Coupled solutions for a bivariate weakly nonexpansive operator}

\section{Introduction and preliminaries}

Let $X$ be a nonempty set. A pair $ \left(x,y\right) \in X \times X $ is called a \textit{coupled  fixed  point} of the mapping $F:X \times X \rightarrow X$ if                                             it is a solution of the system $$ F\left(x,y\right) = x, \quad F\left(y,x\right) = y.$$

The study of coupled fixed points has been considered in 2006 by Bhaskar and Lakshmikantham \cite{Bha} (see also \cite{Op86}). A rich literature on the existence of coupled fixed points of mixed monotone, monotone and non-monotone mappings, has been developed  ever since publication of that paper (see \cite{Abbas}-\cite{Agha12}, \cite{Amini}, \cite{Aydi}, \cite{Ber11}-\cite{Ber12b}, \cite{Ber11a}, \cite{Ber12c}-\cite{Ber12d}, \cite{Ciric12}-\cite{Hussain}, \cite{Kar}-\cite{Khan13b}, \cite{Ola}, \cite{Sabet}-\cite{Xiao}).

The novelty of this paper is that it considered coupled fixed point problem in a partially ordered metric space for mixed monotone mapping $F:X \times X \rightarrow X$ in conjunction with a contraction type condition of the form: 
\begin{equation} \label{Bhas}
	d\left(F\left(x,y\right),F\left(u,v\right)\right) \leq \frac{k}{2}\left[d\left(x,u\right) + d\left(y,v\right)\right],\textnormal{ for each }x \geq u, y \leq  v,
\end{equation}  
where $k \in \left[0,1\right)$.                                                                     

In particular,  the authors established  three kinds of coupled fixed point results: 1) existence theorems (Theorems 2.1 and 2.2); 2) existence and uniqueness theorem (Theorem 2.4); and 3) theorems that ensure the equality of the coupled fixed point components (Theorems 2.5 and 2.6). 

\begin{theorem} \em (\cite{Bha}, Theorem 2.1 and Theorem 2.6) \em \label{th4-1}
Let  $\left(X,\leq\right)$ be a partially ordered set and suppose there is a metric $d$  on $X$ such that $\left(X,d\right)$  is a complete metric space. Let $F : X \times X \rightarrow X $ be a continuous mapping having the mixed monotone property on $X$. 

If $F$  satisfies \eqref{Bhas} and  there exist $x_{0}, y_{0} \in X$ such that  
$$                                                                                       
x_{0} \leq F\left(x_{0},y_{0}\right)\textrm{ and }y_{0} \geq F\left(y_{0},x_{0}\right),
$$
 then there exist $\overline{x}, \overline{y} \in X$ such that $$\overline{x} = F\left(\overline{x},\overline{y} \right)\textnormal{ and }\overline{y}  = F\left(\overline{y} ,\overline{x}\right).$$ 
 Suppose, additionally, that $x_0,y_0 \in X$ are comparable. Then for the coupled fixed point $(\overline{x},\overline{y})$, we have $\overline{x} = \overline{y}$. 
\end{theorem}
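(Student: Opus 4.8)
The plan is to run the natural double Picard/Krasnoselskij-type iteration and exploit the mixed monotone structure to force the two generated sequences to interlace monotonically, then use the contraction \eqref{Bhas} to obtain geometric decay of the increments. Concretely, I would fix the given $x_0,y_0$ and define
$$x_{n+1}=F(x_n,y_n),\qquad y_{n+1}=F(y_n,x_n),\quad n\geq 0.$$
From the hypotheses $x_0\leq F(x_0,y_0)=x_1$ and $y_0\geq F(y_0,x_0)=y_1$, an induction using the mixed monotone property shows that $\{x_n\}$ is non-decreasing and $\{y_n\}$ is non-increasing: assuming $x_{n-1}\leq x_n$ and $y_{n-1}\geq y_n$, monotonicity in the first (non-decreasing) and second (non-increasing) arguments yields $x_n=F(x_{n-1},y_{n-1})\leq F(x_n,y_n)=x_{n+1}$, and symmetrically $y_n\geq y_{n+1}$.

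Next I would estimate the increments. Because $x_n\geq x_{n-1}$ and $y_n\leq y_{n-1}$, condition \eqref{Bhas} applies directly to $d(x_{n+1},x_n)=d(F(x_n,y_n),F(x_{n-1},y_{n-1}))$, and after reorienting the pair it applies to $d(y_{n+1},y_n)$ as well. Adding the two estimates and setting $D_n=d(x_{n+1},x_n)+d(y_{n+1},y_n)$ gives the key recursion $D_n\leq k\,D_{n-1}$, hence $D_n\leq k^n D_0$. Since $k\in[0,1)$, a standard triangle-inequality summation shows that both $\{x_n\}$ and $\{y_n\}$ are Cauchy; completeness of $(X,d)$ then produces limits $\overline{x},\overline{y}$, and continuity of $F$ lets me pass to the limit in the two defining relations to conclude $\overline{x}=F(\overline{x},\overline{y})$ and $\overline{y}=F(\overline{y},\overline{x})$.

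For the final equality statement, I would assume without loss of generality $x_0\leq y_0$, the case $x_0\geq y_0$ being symmetric. A further induction, again via the mixed monotone property, shows $x_n\leq y_n$ for every $n$: from $x_n\leq y_n$ one gets $x_{n+1}=F(x_n,y_n)\leq F(y_n,x_n)=y_{n+1}$. Then \eqref{Bhas} applied to $d(y_{n+1},x_{n+1})=d(F(y_n,x_n),F(x_n,y_n))$ collapses to $d(x_{n+1},y_{n+1})\leq k\,d(x_n,y_n)$, so that $d(x_n,y_n)\leq k^n d(x_0,y_0)\to 0$; letting $n\to\infty$ forces $d(\overline{x},\overline{y})=0$, i.e. $\overline{x}=\overline{y}$.

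The individual steps are routine; the only place demanding genuine care is the consistent orientation of the arguments, since the one-sided contraction \eqref{Bhas} is valid only when $x\geq u$ and $y\leq v$. In particular, to bound $d(y_{n+1},y_n)$ and $d(y_{n+1},x_{n+1})$ one must rewrite each pair in the admissible order before invoking \eqref{Bhas}. This bookkeeping is the main obstacle; once it is handled correctly, the geometric decay together with the completeness and continuity arguments is entirely standard.
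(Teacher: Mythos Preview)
Your argument is correct and is precisely the standard proof from Bhaskar and Lakshmikantham \cite{Bha}: build the double Picard sequence, use the mixed monotone property to get the monotone interlacing $x_n\uparrow$, $y_n\downarrow$, sum the two one-sided contraction estimates to obtain $D_n\le kD_{n-1}$, pass to the limit by completeness and continuity, and then, under comparability of $x_0,y_0$, iterate the bound $d(x_{n+1},y_{n+1})\le k\,d(x_n,y_n)$ to force $\overline{x}=\overline{y}$. Your caution about orienting the pairs before invoking \eqref{Bhas} is exactly the right point of care.

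Note, however, that the present paper does not supply its own proof of this statement at all: Theorem~\ref{th4-1} is quoted from \cite{Bha} purely as background, with no argument given. So there is nothing in the paper to compare against; your proposal simply reproduces the original Bhaskar--Lakshmikantham proof, which is entirely appropriate here.
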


Contraction type conditions arise naturally in connection with Lipshitzian properties of mappings in the study of nonlinear functional differential and integral equations. Therefore,  coupled fixed point results for contractions have  important applications in nonlinear analysis and have been applied successfully for solving  various classes of nonlinear functional equations: 1) integral equations  and systems of integral equations (\cite{Agha12}, \cite{Algham}, \cite{Aydi}, \cite{Berzig}, \cite{Gu}, \cite{Hussain}, \cite{Shat}, \cite{Sint}); (periodic) two point boundary value problems (\cite{Ber12b}, \cite{Bha}, \cite{Ciric12}, \cite{Urs});  nonlinear Hammerstein integral equations (\cite{Sang}); nonlinear elliptic problems and delayed hematopoesis models (\cite{Wu}); systems of differential and integral equations (\cite{Xiao}); nonlinear matrix and nonlinear quadratic equations (\cite{Agha}, \cite{Berzig}), initial value problems for ODE (\cite{Amini}; \cite{Samet}) etc. 

We note that in all the above mentioned cases, the main conclusion is drawn using (\cite{Bha}, Theorem 2.6) which guarantees existence as well as equality of components of the coupled fixed point.

On the other hand, in almost all the papers dealing with study of coupled fixed points, no attention is paid to the  constructive features of such a result, i.e., there is neither explicit mention of the method by which one could approximate  that coupled fixed point, nor on the order of convergence and / or error estimates of the iteration processes involved.

Moreover, there exist (mixed) monotone mappings $F\left(x,y\right)$ (see Examples \ref{ex-2} and \ref{ex-3} below), which possess coupled fixed points, for which no coupled fixed point theorem existing in literature can be applied. This is mainly  because all those theorems (we refer here only to the ones given in \cite{Agha}-\cite{Aydi}, \cite{Ber12b}, \cite{Berzig}, \cite{Bha}, \cite{Ciric12}-\cite{Hussain}, \cite{Sabet}, \cite{Samet}-\cite{Xiao})  are based on a strict contractive type condition  \eqref{Bhas}. 

All the  above observations motivate for constructive study of coupled fixed points of a bivariate mapping $F:X \times X \rightarrow X$ satisfying a weaker contractive condition of nonexpansiveness type
and  providing a constructive method to approximate these coupled fixed points which we generally meet in applications, i.e., when we have equality of the coupled fixed point components. 

The only paper that considers  asymptotically nonexpansive bivariate mappings and  the existence of their coupled fixed points is due to Olaoluwa et al. \cite{Ola}. No other attempt has been made to tackle this important problem. We find here coupled solutions for a bivariate weakly nonexpansive operator on Hilbert spaces through an iterative method. Since asymptotically nonexpansive (and nonexpansive)  bivariate mappings are particular sub-classes of the weakly nonexpansive mappings considered in the present paper, therefore our results also generalize, improve and complement the corresponding results obtained in \cite{Ola}.

In order to illustrate the broader scope and novelty of our results, we present appropriate examples to delineate them from the existing coupled fixed point theorems in literature and  indicate their potential use in applications.

\section{Nonexpansive bivariate operators}

In this paper, we define the concept of nonexpansiveness for bivariate mappings as follows.

\begin{defn}\rm  \label{def3}
Let $X$ be a normed linear  space and $C$ be a
subset of $X$. A mapping $F:C\times C\rightarrow X$ is called \textit{weakly nonexpansive} if 
\begin{equation} \label{nonexp-1}
\left\|F(x,y)-F(u,v)\right\| \leq  a\left\|x-u\right\|+b \left\|y-v\right\|,
\end{equation}
for all $x,y,u,v\in C$, where $a,b\geq 0$ and $a+b\leq 1$.
\end{defn}

 A similar but stronger  concept has been introduced in \cite{Ola}. 
 
 \begin{defn}\rm (\cite{Ola}) \label{def2}
Let $X$ be a normed linear  space and $C$ be a
subset of $X$. A mapping $F:C\times C\rightarrow X$ is called \textit{nonexpansive} if 
\begin{equation} \label{nonexp}
\left\|F(x,y)-F(u,v)\right\| \leq \frac{1}{2} \left(\left\|x-u\right\|+\left\|y-v\right\|\right),
\end{equation}
for all $x,y,u,v\in C$.
\end{defn}

Note that our condition \eqref{nonexp-1} is more general than \eqref{nonexp}:  any nonexpansive mapping $F$ is  weakly nonexpansive but the converse is not true, in general, as shown below.

\begin{ex} \label{ex-2}
Let $X=\mathbb{R}$ (with the usual metric)   and $F:X^2\rightarrow X$ be defined by  
$$
F(x,y)=\frac{x-2y}{3},\,\forall x,y\in X.  
$$
Then $F$ satisfies condition  \eqref{nonexp-1} but does not satisfy condition \eqref{nonexp}. Moreover, $F$ possesses a unique coupled fixed point, $(0,0)$, but no  coupled fixed point theorem established in \cite{Agha}-\cite{Aydi}, \cite{Ber12b}, \cite{Berzig}, \cite{Bha}, \cite{Ciric12}-\cite{Hussain}, \cite{Sabet}, \cite{Samet}-\cite{Xiao}   (and in other related papers) can be applied to this function $F$.

First, let us note that \eqref{nonexp-1} holds with the constants $a=\dfrac{1}{3}$ and $b=\dfrac{2}{3}$. Suppose  $F$  satisfies \eqref{nonexp}. 

Then, taking $x=y, y\neq z$ in \eqref{nonexp}, we get $\dfrac{1}{2}\geq \dfrac{2}{3}$, 
a contradiction. This proves that, indeed, $F$ does not satisfy \eqref{nonexp}.

To prove the last part of our claim, let us consider the contraction condition in  \cite{Sabet} (the same is valid for the corresponding conditions in \cite{Agha}-\cite{Aydi}, \cite{Ber12b}, \cite{Berzig}, \cite{Bha}, \cite{Ciric12}-\cite{Hussain}, \cite{Samet}-\cite{Xiao}),
\begin{equation} \label{eq-2k}
d\left(F\left(x,y\right),F\left(u,v\right)\right) \leq k d\left(x,u\right) + l d\left(y,v\right).
\end{equation}  
where $k,l \in \left(0,1\right)$  with   $k+l<1$.

Assume now that $F$ satisfies \eqref{eq-2k}.  Then,  taking $x=u, y\neq z$ in \eqref{eq-2k}, we get $l\geq \dfrac{2}{3}$ and taking $x\neq u, y= v$ in \eqref{eq-2k}, we get  $k\geq \dfrac{1}{3}$. Now these calculations for $k$ and $l$ lead to
$$
1\leq k+l<1,
$$
a contradiction. This proves that, indeed, $F$ does not satisfy the strict contraction condition \eqref{eq-2k}. This is also true for  contractive conditions considered in \cite{Agha}-\cite{Aydi}, \cite{Ber12b}, \cite{Berzig}, \cite{Bha}, \cite{Ciric12}-\cite{Hussain}, \cite{Sabet}, \cite{Samet}-\cite{Xiao}. 

Observe that for $F$ in this example, the double sequence $\{(x_n,y_n)\}_{n\geq 0}$, defined by the Picard-type iteration
\begin{equation} \label{dublu}
x_{n+1}=F(x_n,y_n),\quad y_{n+1}=F(y_n,x_n),\,n\geq 0,
\end{equation}
with $x_0,y_0 \in X$, is convergent but its limit is not  the coupled fixed point of $F$ (except for the  case  $x_0=y_0$); a fact which follows immediately from the expressions of $x_n$ and $y_n$:
$$
x_{n}=\frac{1}{2}\left[x_0-y_0+\left(-\frac{1}{3}\right)^{n}(x_0+y_0)\right],\,n\geq 0,
$$
$$
y_{n}=\frac{1}{2}\left[y_0-x_0+\left(-\frac{1}{3}\right)^{n}(x_0+y_0)\right],\,n\geq 0,
$$
\end{ex}
 It is important to note that  Opoitsev \cite{Op86} was the first who studied coupled fixed points of bivariate mappings (see also \cite{Op75}, \cite{Op84})  where  a double Picard-type iteration sequence $\{(x_n,y_n)\}_{n\geq 0}$ of the form \eqref{dublu} was used.

In order to state our main results, we need some concepts and results, adapted from the case of mono-variate  operators to the case of bivariate operators.  

The  concept of demicompact operator has been introduced by Petry-shyn \cite{Pet66} (see also \cite{Brow67b} and \cite{Pet73}) for a mapping $T:C\rightarrow H$,  where $C$ is a subset of a Hilbert space  $H$. For the bivariate case it is adapted  as follows.
 
\begin{defn}\rm  \label{def1}
 A mapping $F:C\times C\rightarrow H$ is called \textit{demicompact} if it has the property that whenever $\{u_{n}\}$ and $\{v_{n}\}$ are bounded sequences in $C$
with the property that  $\{F(u_{n},v_{n})-u_{n}\}$  and $\{F(v_{n},u_{n})-v_{n}\}$ converge strongly, then there exists a subsequence $\{(u_{n_{k}}, v_{n_{k}})\}$ of $\{(u_{n},v_{n})\}$ such that $u_{n_{k}}\rightarrow u$  and $v_{n_{k}}\rightarrow v$ strongly.
\end{defn}

We  need the following version of the well known Browder-Gohde-Kirk fixed point theorem  (see, for example, Theorem 3.1 in \cite{Ber07}), stated here in the Hilbert space  setting.

\begin{theorem} \label{th3}
Let $C$ be a bounded, closed and convex subset of a Hilbert space $H$ and let $F:C\times C\rightarrow C$  be a (weakly) nonexpansive  operator. Then $F$ has at least one coupled fixed point in $C$.

\end{theorem}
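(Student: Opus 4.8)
The plan is to reduce the coupled fixed point problem for the bivariate operator $F$ to an ordinary fixed point problem for an associated mono-variate operator on the product space, and then to invoke the classical (mono-variate) Browder-Gohde-Kirk theorem. First I would equip the product $H\times H$ with the inner product $\langle(x,y),(u,v)\rangle=\langle x,u\rangle+\langle y,v\rangle$, so that $H\times H$ becomes a Hilbert space with induced norm $\|(x,y)\|^2=\|x\|^2+\|y\|^2$. Since $C$ is bounded, closed and convex in $H$, the product $C\times C$ inherits all three properties in $H\times H$.

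Next I would introduce the operator $G:C\times C\to C\times C$ defined by $G(x,y)=(F(x,y),F(y,x))$; this is a genuine self-map because $F$ takes values in $C$. The point of the construction is that $(\overline x,\overline y)$ is a coupled fixed point of $F$ precisely when it is a fixed point of $G$, as the equality $G(\overline x,\overline y)=(\overline x,\overline y)$ unfolds exactly into the system $F(\overline x,\overline y)=\overline x$ and $F(\overline y,\overline x)=\overline y$. Thus it suffices to produce a fixed point of $G$.

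The crucial step is to verify that $G$ is nonexpansive on $C\times C$ with respect to the product norm. Writing $p=\|x-u\|$ and $q=\|y-v\|$ and applying the weak nonexpansiveness condition \eqref{nonexp-1} in both slots gives $\|F(x,y)-F(u,v)\|\le ap+bq$ and $\|F(y,x)-F(v,u)\|\le aq+bp$, whence
\begin{equation*}
\|G(x,y)-G(u,v)\|^2\le(ap+bq)^2+(aq+bp)^2=(a^2+b^2)(p^2+q^2)+4ab\,pq.
\end{equation*}
Using $2pq\le p^2+q^2$ to bound the cross term and then invoking $a+b\le 1$, one obtains $\|G(x,y)-G(u,v)\|^2\le(a+b)^2(p^2+q^2)\le p^2+q^2=\|(x,y)-(u,v)\|^2$, so $G$ is nonexpansive. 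I expect this computation to be the main (and essentially the only nontrivial) obstacle: the constraint $a+b\le 1$ is precisely what is needed to absorb the cross term $4ab\,pq$, and without it the argument would fail.

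Finally, since $C\times C$ is a bounded, closed and convex subset of the Hilbert space $H\times H$ and $G$ is a nonexpansive self-map of it, the classical Browder-Gohde-Kirk fixed point theorem guarantees that $G$ has a fixed point in $C\times C$, which by the equivalence established above is the desired coupled fixed point of $F$.
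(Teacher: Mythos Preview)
Your proof is correct, but the paper takes a simpler and more direct route. Instead of passing to the product space, the paper defines the \emph{diagonal} operator $T:C\to C$ by $T(x)=F(x,x)$; the weak nonexpansiveness condition immediately gives
\[
\|T(x)-T(y)\|=\|F(x,x)-F(y,y)\|\le a\|x-y\|+b\|x-y\|=(a+b)\|x-y\|\le\|x-y\|,
\]
so $T$ is nonexpansive on $C$ itself and Browder--Gohde--Kirk yields a point $\overline{x}\in C$ with $F(\overline{x},\overline{x})=\overline{x}$, i.e.\ a coupled fixed point with equal components. This avoids the product-space construction and the cross-term estimate $4ab\,pq\le 2ab(p^2+q^2)$ entirely. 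What your approach buys is a bit more generality in principle: the fixed point of $G$ you obtain need not a priori lie on the diagonal, so the argument would detect coupled fixed points $(\overline{x},\overline{y})$ with $\overline{x}\neq\overline{y}$ when they exist. On the other hand, the paper's version delivers the stronger conclusion that a \emph{diagonal} coupled fixed point exists, a fact the authors explicitly use downstream (see the Remark following the theorem and the proof of Theorem~\ref{th4}). Your product-space argument can be upgraded to give this as well---if $(\overline{x},\overline{y})\in\mathrm{Fix}(G)$ then by symmetry so is $(\overline{y},\overline{x})$, and convexity of the fixed-point set of a nonexpansive map forces the midpoint $\bigl(\tfrac{\overline{x}+\overline{y}}{2},\tfrac{\overline{x}+\overline{y}}{2}\bigr)$ to be a diagonal fixed point---but that is extra work the paper's one-line reduction sidesteps.
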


\begin{proof}
Let $T:C\rightarrow C$ be given by $T(x)=F(x,x),x\in C$. By the (weakly) nonexpansiveness property of $F$, we obtain the nonexpansiveness of $T$ and hence by Browder-Gohde-Kirk fixed point theorem, it follows that $Fix\,(T)\neq \emptyset$.

\end{proof}

\begin{rem}
Theorem \ref{th3} shows that $F$ has at least one (coupled) fixed point of the form $(\overline{x}, \overline{x})\in C\times C$, but in general, for a bivariate mapping $F$  it is  also possible to have coupled fixed points $(\overline{x},\overline{y})$ with unequal components, i.e., such that $\overline{x}\neq \overline{y}$, as shown by the following example.
\end{rem}

\begin{ex} \label{ex-1}
Let $X=\mathbb{R}$  (with the usual metric), $C=[-4,4]$   and $F:C^2\rightarrow X$ be defined by  
$$
F(x,y)=4-x^2-2y,\,\forall x,y\in C.  
$$
Then $F$ is weakly Lipschitzian with constants $a=8$ and $b=2$ (in the sense of Definition \ref{def3}) and $F$ possesses two coupled fixed points $(-4,-4)$, $(1,1)$ with equal components and two coupled fixed points with unequal components, $(-1,2)$ and $(2,-1)$.
\end{ex}

We close this section by stating another auxiliary result that will be needed.
\begin{lem} {\em (\cite{Ber07}, Lemma 1.8)} \label{lem1}
Let $x,y,z$ be points in a Hilbert space and $\lambda \in [0,1]$. Then
\[
\left\| \lambda x+(1-\lambda )y-z\right\| ^{2}=\lambda \left\|
x-z\right\| ^{2}+(1-\lambda )\left\| y-z\,\right\| ^{2}-\lambda
(1-\lambda )\left\| x-y\right\| ^{2}.
\]
\end{lem}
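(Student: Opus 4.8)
The plan is to reduce the identity to a direct expansion in the inner product, exploiting that in a Hilbert space $\|w\|^2=\langle w,w\rangle$. The crucial preliminary observation is that, since $\lambda+(1-\lambda)=1$, one may absorb $z$ into the convex combination and write
\[
\lambda x+(1-\lambda)y-z=\lambda(x-z)+(1-\lambda)(y-z).
\]
Setting $u=x-z$ and $v=y-z$, the left-hand side of the claimed identity becomes $\|\lambda u+(1-\lambda)v\|^2$, and since $u-v=x-y$, the last term on the right-hand side is $-\lambda(1-\lambda)\|u-v\|^2$. This substitution is what makes the computation symmetric and transparent.

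First I would expand the left-hand side by bilinearity and symmetry of the inner product:
\[
\|\lambda u+(1-\lambda)v\|^2=\lambda^2\|u\|^2+2\lambda(1-\lambda)\langle u,v\rangle+(1-\lambda)^2\|v\|^2.
\]
Next I would expand the right-hand side using $\|u-v\|^2=\|u\|^2-2\langle u,v\rangle+\|v\|^2$ and collect the coefficients of $\|u\|^2$, $\|v\|^2$ and $\langle u,v\rangle$ separately. The coefficient of $\|u\|^2$ is $\lambda-\lambda(1-\lambda)=\lambda^2$, that of $\|v\|^2$ is $(1-\lambda)-\lambda(1-\lambda)=(1-\lambda)^2$, and that of $\langle u,v\rangle$ is $2\lambda(1-\lambda)$. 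Comparing with the expansion of the left-hand side shows the two sides agree term by term, which establishes the identity.

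The argument involves no genuine obstacle: everything follows from the inner-product structure and the normalization $\lambda+(1-\lambda)=1$. The only point requiring a little care is the algebraic bookkeeping when expanding the right-hand side, namely recognizing that the two mixed contributions $-\lambda(1-\lambda)\|u\|^2$ and $-\lambda(1-\lambda)\|v\|^2$ combine with $\lambda\|u\|^2$ and $(1-\lambda)\|v\|^2$ to produce exactly $\lambda^2\|u\|^2$ and $(1-\lambda)^2\|v\|^2$. I note that the identity in fact holds for every real $\lambda$, so the restriction $\lambda\in[0,1]$ in the statement is not used in the proof; it is imposed only because that is the range relevant to the convex-combination setting in which the lemma is later applied.
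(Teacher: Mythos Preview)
Your proof is correct: the substitution $u=x-z$, $v=y-z$ and direct expansion via the inner product is exactly the standard argument for this identity, and your bookkeeping of the coefficients is accurate. Note that the paper does not actually prove this lemma; it is merely quoted from \cite{Ber07} as an auxiliary result, so there is no in-paper proof to compare against.
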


\section{Main results}

The main result of this paper is the following strong convergence theorem for a double Krasnoselskij-type algorithm associated with bivariate weakly nonexpansive operators on Hilbert spaces.

\begin{theorem} \label{th4}
Let $C$ be a bounded, closed and convex subset of a Hilbert space $H$ and let $F:C\times C\rightarrow C$  be weakly nonexpansive and demicompact operator. Then the set   of coupled fixed points of $F$  is nonempty and  the double iterative algorithm $\{(x_{n},x_{n})\}_{n=0}^{\infty }$ given by $x_{0}$ in $C$ and
\begin{equation} \label{eq-10}
x_{n+1} = \lambda x_n+(1-\lambda) F(x_n, x_n ),  n \geq 0,
\end{equation}
where $\lambda\in (0,1)$, converges (strongly) to a coupled fixed point  of $F$. 
\end{theorem}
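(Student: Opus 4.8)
The plan is to reduce the bivariate problem to the classical mono-variate Krasnoselskij iteration in Hilbert space. First I would set $T:C\to C$, $T(x)=F(x,x)$. Weak nonexpansiveness of $F$ gives, for all $x,y\in C$,
\[
\|T(x)-T(y)\|=\|F(x,x)-F(y,y)\|\le a\|x-y\|+b\|x-y\|=(a+b)\|x-y\|\le\|x-y\|,
\]
so $T$ is nonexpansive. Moreover, $T$ inherits demicompactness from $F$: given a bounded sequence $\{u_n\}$ with $\{T(u_n)-u_n\}$ convergent, I put $v_n=u_n$ and apply Definition \ref{def1} to the pair $(\{u_n\},\{v_n\})$, since both $\{F(u_n,v_n)-u_n\}$ and $\{F(v_n,u_n)-v_n\}$ then coincide with $\{T(u_n)-u_n\}$, extracting a strongly convergent subsequence. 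Finally, \eqref{eq-10} reads $x_{n+1}=\lambda x_n+(1-\lambda)T(x_n)$, the Krasnoselskij iteration for $T$, and any fixed point $\overline x$ of $T$ yields a coupled fixed point $(\overline x,\overline x)$ of $F$; nonemptiness of the coupled fixed point set is already ensured by Theorem \ref{th3}.

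Next I would prove Fej\'er monotonicity. Fixing $p$ with $Tp=p$, nonexpansiveness together with the convexity of the update gives
\[
\|x_{n+1}-p\|\le\lambda\|x_n-p\|+(1-\lambda)\|T(x_n)-p\|\le\|x_n-p\|,
\]
so $\{\|x_n-p\|\}$ is nonincreasing and convergent. The heart of the argument, and the step I expect to be the main obstacle, is asymptotic regularity, since this is precisely where the Hilbert-space geometry enters. Applying Lemma \ref{lem1} with $x=x_n$, $y=T(x_n)$, $z=p$ and then using $\|T(x_n)-p\|\le\|x_n-p\|$ yields
\[
\|x_{n+1}-p\|^2\le\|x_n-p\|^2-\lambda(1-\lambda)\|x_n-T(x_n)\|^2.
\]
Summing over $n$ telescopes the right-hand side, so $\sum_{n\ge0}\|x_n-T(x_n)\|^2\le\|x_0-p\|^2/(\lambda(1-\lambda))<\infty$, and hence $\|x_n-T(x_n)\|\to0$.

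To finish, I would invoke demicompactness. The sequence $\{x_n\}$ is bounded because $C$ is bounded, and $\{T(x_n)-x_n\}\to0$ strongly, so some subsequence satisfies $x_{n_k}\to x^*$ strongly, with $x^*\in C$ by closedness of $C$. Continuity of $T$ and $T(x_{n_k})-x_{n_k}\to0$ then force $T(x^*)=x^*$, i.e. $(x^*,x^*)$ is a coupled fixed point of $F$. Because $x^*$ is itself a fixed point, the Fej\'er monotonicity above applies with $p=x^*$, so $\{\|x_n-x^*\|\}$ converges; its limit must equal $\lim_k\|x_{n_k}-x^*\|=0$, whence the whole sequence $x_n\to x^*$ strongly, completing the argument.
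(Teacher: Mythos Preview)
Your proof is correct and follows essentially the same strategy as the paper: establish asymptotic regularity $\|x_n-F(x_n,x_n)\|\to 0$ via the Hilbert-space identity in Lemma~\ref{lem1}, extract a strongly convergent subsequence by demicompactness, identify its limit as a coupled fixed point, and upgrade to full-sequence convergence by Fej\'er monotonicity. Your explicit reduction to the mono-variate operator $T(x)=F(x,x)$ and your direct application of Lemma~\ref{lem1} (yielding $\|x_{n+1}-p\|^2\le\|x_n-p\|^2-\lambda(1-\lambda)\|x_n-T(x_n)\|^2$ in one line) are tidier than the paper's route, which expands the norm, introduces an auxiliary parameter $a^2\le\lambda(1-\lambda)$, and invokes Cauchy--Schwarz to reach the same telescoping inequality; but the two arguments are substantively identical.
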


\begin{proof} 

By Theorem \ref{th3}, $F$ has at least one coupled fixed point with equal components, $(\overline{x},\overline{x})\in C\times C$. 

We first show that the sequence $\{x_{n}-F(x_{n},x_{n})\}_{\,n\,\in \,\mathbb{N}}$ converges strongly to zero.

Indeed, by using Lemma \ref{lem1},
$$
\left\|x_{n+1}-\overline{x}\right\|^2=\left\|\lambda x_n+(1-\lambda) F(x_n, x_n )-\overline{x}\right\|^2\leq \lambda^2 \cdot \left\|x_n-\overline{x}\right\|^2
$$
\begin{equation} \label{eq-11}
+(1-\lambda)^2 \cdot \left\|F(x_n,x_n)-\overline{x}\right\|^2+2\lambda(1-\lambda)\left\langle F(x_n,x_n)-\overline{x},x_n-\overline{x}\right\rangle.
\end{equation}
On the other hand, 
\begin{equation} \label{eq-12}
\left\|x_n-F(x_n,x_n)\right\|^2= \left\|x_n-\overline{x}\right\|^2+ \left\|F(x_n,x_n)-\overline{x}\right\|^2-\left\langle F(x_n,x_n)-\overline{x},x_n-\overline{x}\right\rangle.
\end{equation}

By \eqref{eq-11}, \eqref{eq-12}, and  weak nonexpansiveness of $F$ and the fact that $F(\overline{x},\overline{x})=\overline{x}$, it follows that for any real number $a$ we have
\begin{equation*}
\left\| \,x_{n+1}-\overline{x}\right\| ^{2}+a^{2}\left\| \,x_{n}-F(x_{n},x_n)\right\|
^{2}\leq [2a^{2}+\lambda ^{2}+(1-\lambda )^{2}]\cdot \left\|
\,\,x_{n}-\overline{x}\right\| ^{2}+
\end{equation*}
\begin{equation}
+2[\lambda (1-\lambda )-a^{2}]\cdot \left\langle F(x_{n},x_n)-\overline{x},x_{n}-\overline{x}\right\rangle.
\end{equation} \label{eq-100a}
If we choose now a nonzero $a$ such that $a^{2}\leq \lambda (1-\lambda
)$, then from the last inequality we obtain
\begin{equation*}
\left\| \,x_{n+1}-\overline{x}\right\| ^{2}+a^{2}\left\| \,x_{n}-F(x_{n},x_n)\right\|
^{2}\leq
\end{equation*}
\begin{equation} \label{eq-100}
\leq \left( 2a^{2}+\lambda ^{2}+(1-\lambda )^{2}+2\lambda
(1-\lambda )-2a^{2}\right) \left\| x_{n}-\overline{x}\right\| ^{2}=\left\|
x_{n}-\overline{x}\right\| ^{2}.
\end{equation}
(we used the Cauchy-Schwarz inequality, 
\begin{equation*}
\left. \left\langle F(x_{n},x_n)-\overline{x},x_{n}-\overline{x}\right\rangle\,\,\leq \,\,\left\| \,F(x_{n},x_n)-\overline{x}\right\|
\cdot \left\| \,x_{n}-\overline{x}\right\| \leq \left\| \,\,x_{n}-\overline{x}\right\|
^{2}\right).
\end{equation*}
So, by  \eqref{eq-100} we get
\begin{equation} \label{eq-14}
 a^2\left\|x_n-F(x_n,x_n)\right\|^2\leq \left\|x_{n}-p\right\|^2-\left\|x_{n+1}-p\right\|^2, \,n \geq 0.
\end{equation}
By \eqref{eq-100} we deduce that  $\{\left\| \,\,x_{n}-\overline{x}\right\|\}$ is a decreasing 
sequence of non negative real numbers, hence it is convergent. 
By the inequality \eqref{eq-100}, we also have
$$
0\leq \left\|x_n-F(x_n,x_n)\right\|^2\leq \frac{1}{a^2}\left(\left\|x_{n}-p\right\|^2-\left\|x_{n+1}-p\right\|^2\right), \,n \geq 0.
$$
from which, by letting $n\rightarrow \infty$, we obtain
\begin{equation} \label{eq-16}
\lim_{n\rightarrow \infty} \left\|x_n-F(x_n,x_n)\right\|=0.
\end{equation}
This shows that $x_n-F(x_n,x_n)\rightarrow 0$ (strongly) and so it follows by demicompactness of $F$ that there exists a subsequence  $\{x_{n_k}\}\subset C$ and a point $q\in C$ such that
$$
\lim_{k\rightarrow \infty} x_{n_k}=q.
$$
As $F$ is nonexpansive, so it is continuous. This implies
$$
\lim_{k\rightarrow \infty} F (x_{n_k},x_{n_k})=F(q,q).
$$
By \eqref{eq-16}, 
$
0=\lim_{k\rightarrow \infty} \left(x_{n_k}- F(x_{n_k},x_{n_k})\right)=q-F(q,q),
$
which shows that $(q,q)$ is a coupled fixed point of $F$.

Using now the inequality \eqref{eq-14}, with $\overline{x}=q$, we deduce that the sequence of nonnegative real numbers $\{\left\|x_n-q\right\|\}_{n\geq 0}$ is nonincreasing, hence convergent. 

Since its subsequence $\{\left\|x_{n_k}-q\right\|\}_{k\geq 0}$ converges to $0$, it follows that the  sequence $\{\left\|x_n-q\right\|\}_{n\geq 0}$ itself converges to $0$, that is, the sequence $\{(x_n,x_n)\}$ converges strongly to $(q,q)$, as $n\rightarrow \infty$.
\end{proof}

\begin{rem}
Any nonexpansive bivariate mapping is weakly nonexpansive. Hence, by Theorem \ref{th4}, we obtain Corollary 2.3 in \cite{Ola}.
\end{rem}

We now introduce the concept of demicompactness at a point for a bivariate operator (adapted from the original definition of Petryshyn \cite{Pet66}).

\begin{defn} \rm
 A map $F$ of $C\times C\subset H$ into $H$ is said to be \textit{demicompact at} $(u,u)$ if, for any bounded  sequence $\{x_n\}$ in $C$ such that $x_n-F( x_n,x_n)\rightarrow (u,u)$ as $n\rightarrow \infty$, there exists a subsequence $\{x_{n_j}\}$ and an $x$ in $C$ such that $x_{n_j}\rightarrow x$ as $j\rightarrow \infty$ and $x-F(x,x)=u.$ 
\end{defn}

\begin{rem} \rm
Clearly, if $F$ is demicompact on $C$, then it is demicompact at $0$ but the converse is not true. 

The demicompactness  of $F$ on the whole $C$ in Theorem \ref{th4} may be weakened to the demicompactness at $0$.

\end{rem}

\begin{theorem} \label{th5}
Let $H$ be a Hilbert space, $C$ a closed, bounded and convex subset of $H$, and $F:C\times C\rightarrow C$ a weakly nonexpansive mapping such that $F$ is demicompact at $0$.

Then the Krasnoselkij-type double sequence $\{(x_{n},x_n)\}_{n=0}^{\infty }$ given by $x_{0}$ in $C$ and \eqref{eq-10} converges (strongly) to a coupled fixed point of $F$.
\end{theorem}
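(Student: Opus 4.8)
The plan is to observe that the argument establishing Theorem~\ref{th4} uses the full demicompactness of $F$ on $C$ only at a single moment, and only in the situation where the relevant sequence tends to $0$; hence the weaker hypothesis of demicompactness at $0$ already suffices to carry the whole proof through unchanged. Concretely, I would begin exactly as before: since $F$ is weakly nonexpansive, Theorem~\ref{th3} guarantees a coupled fixed point $(\overline{x},\overline{x})\in C\times C$, and Lemma~\ref{lem1} together with condition \eqref{nonexp-1} yields verbatim the estimates \eqref{eq-100} and \eqref{eq-14}. These estimates depend solely on the weak nonexpansiveness of $F$ and on the Hilbert space identity of Lemma~\ref{lem1}, neither of which has been weakened, so they remain valid under the present hypotheses.

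From \eqref{eq-100} the sequence $\{\|x_n-\overline{x}\|\}$ is nonincreasing and therefore convergent, and from \eqref{eq-14} I would deduce, exactly as in the proof of Theorem~\ref{th4}, that
\[
\lim_{n\to\infty}\|x_n-F(x_n,x_n)\|=0.
\]
Here the earlier proof invoked demicompactness of $F$ on all of $C$ to extract a strongly convergent subsequence. The key remark is that this invocation is applied only to the sequence $\{x_n\}$, which is bounded because it lies in the bounded set $C$ (recall $x_0\in C$, $F$ maps into $C$, and $C$ is convex, so each iterate \eqref{eq-10} stays in $C$), and only under the condition $x_n-F(x_n,x_n)\to 0$. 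This is precisely the defining situation of demicompactness at $0$.

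Therefore I would apply demicompactness at $0$ directly: there exist a subsequence $\{x_{n_j}\}$ and a point $q\in C$ with $x_{n_j}\to q$ and $q-F(q,q)=0$, so that $(q,q)$ is a coupled fixed point of $F$; note that the defining property of demicompactness at $0$ delivers the fixed point relation $q=F(q,q)$ automatically, so the separate continuity argument used in Theorem~\ref{th4} is not even needed. Finally, taking $\overline{x}=q$ in \eqref{eq-14} shows that $\{\|x_n-q\|\}$ is nonincreasing, hence convergent; since the subsequence $\{\|x_{n_j}-q\|\}$ tends to $0$, the full sequence $\{\|x_n-q\|\}$ tends to $0$, and thus $\{(x_n,x_n)\}$ converges strongly to $(q,q)$. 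The only genuine content is the observation isolating $0$ as the single point at which demicompactness is actually used; once this is recognized there is no real obstacle, as all the analytic work has already been carried out in Theorem~\ref{th4}.
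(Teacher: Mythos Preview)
Your proposal is correct and matches the paper's own proof, which consists of the single observation that the demicompactness hypothesis in Theorem~\ref{th4} is invoked only at the moment when $x_n-F(x_n,x_n)\to 0$, i.e., only demicompactness at $0$ is actually used. Your additional remark that the definition of demicompactness at $0$ already yields $q-F(q,q)=0$ directly (making the continuity step superfluous) is a nice sharpening, but the underlying approach is identical.
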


\begin{proof}
Note that in the proof of Theorem \ref{th4}, we actually used the demicompactness of $F$ at $0$, so the arguments used there can be applied here.
\end{proof}

\begin{rem} \rm
The conclusion of Theorem \ref{th5} remains true if instead of the demicompactness of $F$ at $0$, we suppose $I-F(x,x)$ maps closed sets in $C$ into closed sets of $H$ (see \cite{Pet66}).
\end{rem}

If in Theorems \ref{th4} and \ref{th5},  we remove the  demicompactness assumption, then (see \cite{Ber07}), the  Krasnoselskij iteration does no longer converge strongly, in general,
but it could converge (at least) weakly to a fixed point, as shown in the next theorem, which extends  Theorem 3.3 in \cite{Ber07}.

Denote by $Fix\,(F)$,  the set of all coupled fixed points of $F$ with equal components, i.e., $Fix\,(F)=\{p\in C :\,F(p,p)=p\}$.

\begin{theorem} \label{th5-1}
Let $H$ be a Hilbert space, $C$ a closed, bounded and convex subset of $H$, and $F:C\times C \rightarrow C$ a weakly nonexpansive mapping such that  $Fix\,(F)=\{(p,p)\}$. Then the Krasnoselskij iteration $\{x_{n}\}_{n=0}^{\infty }$ given by $x_{0}$ in $C$ and
\begin{equation} \label{eq-21}
x_{n+1} = (1-\lambda) x_n+\lambda F(x_n ,x_n),  n \geq 0,
\end{equation}
converges weakly to $p$, for any $\lambda\in (0,1)$.
\end{theorem}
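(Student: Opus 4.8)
The plan is to reduce the bivariate problem to a mono-variate one and then invoke the standard machinery for weak convergence of Krasnoselskij iterations of nonexpansive maps. First I would set $Tx:=F(x,x)$; as in the proof of Theorem \ref{th3}, the weak nonexpansiveness of $F$ together with $a+b\le 1$ yields $\left\|Tx-Ty\right\|\le (a+b)\left\|x-y\right\|\le \left\|x-y\right\|$, so $T$ is nonexpansive and $Fix\,(T)=Fix\,(F)=\{p\}$. The iteration \eqref{eq-21} is precisely the Krasnoselskij iteration $x_{n+1}=(1-\lambda)x_n+\lambda Tx_n$ for $T$.

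Next I would establish asymptotic regularity, i.e. $\left\|x_n-Tx_n\right\|\to 0$. This step does not require demicompactness: the computation in the proof of Theorem \ref{th4} based on Lemma \ref{lem1} --- with the roles of $\lambda$ and $1-\lambda$ interchanged --- shows that $\{\left\|x_n-p\right\|\}$ is nonincreasing (hence Fej\'er monotone with respect to $p$) and delivers $\lim_{n\to\infty}\left\|x_n-F(x_n,x_n)\right\|=0$, which is exactly $\left\|x_n-Tx_n\right\|\to 0$.

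The core of the argument is the demiclosedness principle. Since $C$ is bounded, closed and convex in the Hilbert space $H$, it is weakly sequentially compact, so $\{x_n\}$ has a subsequence with $x_{n_k}\rightharpoonup q\in C$. Because $T$ is nonexpansive, $I-T$ is demiclosed at $0$ (Browder's demiclosedness principle; cf. the proof of Theorem 3.3 in \cite{Ber07}); combined with $x_{n_k}-Tx_{n_k}\to 0$ this forces $q-Tq=0$, hence $q\in Fix\,(T)=\{p\}$ and $q=p$.

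Finally I would upgrade subsequential weak convergence to weak convergence of the whole sequence. Since $\{x_n\}$ is bounded and, by the previous step together with the singleton hypothesis $Fix\,(F)=\{p\}$, every weakly convergent subsequence has limit $p$, a routine subsequence-of-a-subsequence argument gives $x_n\rightharpoonup p$. I expect the demiclosedness step to be the main obstacle: unlike the strong case of Theorem \ref{th4}, demicompactness is unavailable here, so one must supply --- or carefully cite --- the demiclosedness of $I-T$ at $0$ for the nonexpansive diagonal map $T$. It is worth noting that the assumption $Fix\,(F)=\{p\}$ is exactly what lets us bypass Opial's condition entirely, since it makes the weak cluster point automatically unique.
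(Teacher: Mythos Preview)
Your proposal is correct and follows essentially the same route as the paper: reduce to a mono-variate nonexpansive map, obtain asymptotic regularity from the computation in Theorem~\ref{th4}, and show that every weak cluster point of $\{x_n\}$ must be the unique fixed point $p$. The only cosmetic difference is that you invoke Browder's demiclosedness principle as a black box, whereas the paper (working with the averaged map $Tx=(1-\lambda)x+\lambda F(x,x)$ rather than the diagonal $F(x,x)$) carries out the demiclosedness step by hand via the Hilbert-space identity $\|x_{n_j}-Tp_0\|^2=\|x_{n_j}-p_0\|^2+\|p_0-Tp_0\|^2+2\langle x_{n_j}-p_0,\,p_0-Tp_0\rangle$ together with $x_{n_j}\rightharpoonup p_0$.
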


\begin{proof} 
It suffices to show that if $\{x_{n_{j}}\}_{j=0}^{\infty },\ x_{n_{j}}=T^{n_{j}} x$, where $T x=(1-\lambda) x+\lambda F(x_,x)$, converges weakly to a certain $
p_{0}$, then $p_{0}$ is a fixed point of $T$ (and hence of $F$) and therefore $p_{0}=p.$ Suppose that $\{x_{n_{j}}\}_{j=0}^{\infty
}$ does not converge weakly to $p$. As  $F$ is weakly nonexpansive, so we have 
\begin{equation*}
\left\|Tx\!-\!Ty\right\|\!\leq\! \lambda \left\|x\!-\!y\right\|+(1\!-\!\lambda) \left\|F(x,x)\!-\!F(y,y)\right\|\!
\end{equation*}
$$
\leq\! \lambda \left\|x\!-\!y\right\|+(1\!-\!\lambda) \left\|x\!-\!y\right\|\!=\!\left\|x\!-\!y\right\|,
$$
which shows that $T$ is nonexpansive and hence we get:
\begin{equation*}
\left\| \,\,x_{n_{j}}-T p_{0}\right\| \leq \left\| \,\,Tx_{n_{j}}-Tp_{0}\right\| +\left\| \,\,x_{n_{j}}-Tx_{n_{j}}\right\| \leq
\end{equation*}
\begin{equation*}
\leq \left\| \,\,x_{n_{j}}-p_{0}\right\| +\left\|
\,\,x_{n_{j}}-Tx_{n_{j}}\right\|.
\end{equation*}
Using the arguments in the proof of Theorem \ref{th3}, it follows
\begin{equation*}
\left\| \,\,x_{n_{j}}-Tx_{n_{j}}\right\| \rightarrow 0,\,\,\,\,\,%
\text{as}\,\,\,\,\,\,n\rightarrow \infty ,
\end{equation*}
and so the last inequality implies that
\begin{equation} \label{eq-24}
\lim \,\,\sup \left( \,\left\| \,\,x_{n_{j}}-Tp_{0}\right\|
-\,\left\| \,\,x_{n_{j}}-p_{0}\right\| \,\right) \leq 0.  
\end{equation}
As in the proof of Theorem \ref{th3}, we have
\begin{equation*}
\left\| \,\,x_{n_{j}}-Tp_{0}\right\| ^{2}=\left\|
\,\,(x_{n_{j}}-p_{0})+(p_{0}-Tp_{0})\right\| ^{2}=
\end{equation*}
\begin{equation*}
=\left\| \,\,x_{n_{j}}-p_{0}\right\| ^{2}+\left\| p_{0}-Tp_{0}\right\| ^{2}+2\left\langle x_{n_{j}}-p_{0},p_{0}-Tp_{0}\right\rangle,
\end{equation*}
which shows, together with $x_{n_{j}}\rightharpoonup p_{0}$ (as $%
j\rightarrow \infty $), that
\begin{equation} \label{eq-22}
\underset{n\rightarrow \infty }{\lim }\left[ \left\|
\,\,x_{n_{j}}-Tp_{0}\right\| ^{2}-\left\| x_{n_{j}}-p_{0}\right\|
^{2}\,\right] =\left\| p_{0}-Tp_{0}\right\| ^{2}.  
\end{equation}
On the other hand, we have
\begin{equation}\label{eq-23}
\left\| \,\,x_{n_{j}}-Tp_{0}\right\| ^{2}-\left\|
x_{n_{j}}-p_{0}\right\| ^{2}=\left( \,\left\| \,\,x_{n_{j}}-Tp_{0}\right\| -\left\| \,\,x_{n_{j}}-p_{0}\right\| \,\right) \cdot
\end{equation}
\begin{equation*} 
\cdot \left( \,\left\| \,\,x_{n_{j}}-Tp_{0}\right\| +\left\|
\,\,x_{n_{j}}-p_{0}\right\| \,\right) .  
\end{equation*}
Since $C$ is bounded, the sequence $$\left\{ \left\|
\,\,x_{n_{j}}-Tp_{0}\right\| +\,\left\|
\,\,x_{n_{j}}-p_{0}\right\| \right\} \,\,$$ is bounded, too, and so by
the relations \eqref{eq-24} -- \eqref{eq-23} we get
\begin{equation*}
\left\| p_{0}-Tp_{0}\right\| \leq 0,\,\,\,\,\,\,\,\text{%
i.e.,\thinspace }\,\,\,\,\,\,\,\,\,\,Tp_{0}=p_{0}\Leftrightarrow p_{0}=F(p_0,p_0)=p.
\end{equation*}
\end{proof}

 \begin{rem} \rm The assumption $Fix\,(F)=\{(p,p)\}$ in Theorem \ref{th5-1} may be
removed  to obtain the following more general result (similar to Theorem 3.4 in \cite{Ber07}).
 \end{rem}

\begin{theorem} \label{th6}

Let $C$ be a bounded, closed and convex subset of a Hilbert space and $F:C\times C\rightarrow
C$ be weakly nonexpansive operator. Then the Krasnoselskij algorithm $\{x_{n}\}_{n=0}^{\infty }$ given by $x_{0}$ in $C$ and
\begin{equation*} 
x_{n+1} = (1-\lambda) x_n+\lambda F(x_n ,x_n),  n \geq 0,
\end{equation*}
converges weakly to a coupled fixed point of $F$.
\end{theorem}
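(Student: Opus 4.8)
The plan is to follow the classical Opial-type argument for weak convergence of iterates of nonexpansive mappings, transferred to the associated averaged operator. First I would set $Tx=(1-\lambda)x+\lambda F(x,x)$ for $x\in C$; exactly as in the proof of Theorem \ref{th5-1}, weak nonexpansiveness of $F$ together with $a+b\leq 1$ gives that $T$ is nonexpansive on $C$, and its fixed point set coincides with $Fix\,(F)$, which is nonempty by Theorem \ref{th3}. Since $C$ is bounded, closed and convex, it is weakly compact, so the bounded Krasnoselskij sequence $\{x_n\}$, with $x_{n+1}=Tx_n$, admits weakly convergent subsequences.

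The two analytic ingredients I would extract from the earlier proofs are \emph{Fej\'er monotonicity} and \emph{asymptotic regularity}. For any $p\in Fix\,(F)$, the estimate \eqref{eq-14} (with the roles of $\lambda$ and $1-\lambda$ interchanged, which leaves the condition $a^{2}\leq\lambda(1-\lambda)$ unchanged) yields $a^{2}\|x_n-F(x_n,x_n)\|^{2}\leq\|x_n-p\|^{2}-\|x_{n+1}-p\|^{2}$, so $\{\|x_n-p\|\}$ is nonincreasing and hence convergent for every fixed point $p$; the same estimate forces $\|x_n-F(x_n,x_n)\|\to 0$, and therefore $\|x_n-Tx_n\|=\lambda\|x_n-F(x_n,x_n)\|\to 0$.

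Next I would establish the \emph{demiclosedness of $I-T$ at $0$}: whenever a subsequence satisfies $x_{n_j}\rightharpoonup w$, then $w\in Fix\,(F)$. This is precisely the computation already carried out in Theorem \ref{th5-1}: expanding $\|x_{n_j}-Tw\|^{2}=\|(x_{n_j}-w)+(w-Tw)\|^{2}$ via Lemma \ref{lem1}, using $\|x_{n_j}-Tx_{n_j}\|\to 0$ together with the nonexpansiveness of $T$ and passing to the weak limit $x_{n_j}\rightharpoonup w$, gives $\|w-Tw\|\leq 0$, i.e.\ $Tw=w$.

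Finally, the main obstacle---and the only genuinely new point compared with Theorem \ref{th5-1}---is to promote subsequential weak convergence to convergence of the \emph{whole} sequence once the singleton assumption on $Fix\,(F)$ is dropped. Here I would invoke Opial's property of Hilbert spaces. Suppose two subsequences satisfy $x_{n_j}\rightharpoonup p_0$ and $x_{m_k}\rightharpoonup p_1$; by demiclosedness both $p_0,p_1\in Fix\,(F)$, and by Fej\'er monotonicity each of $\lim_n\|x_n-p_0\|$ and $\lim_n\|x_n-p_1\|$ exists. If $p_0\neq p_1$, Opial's inequality gives $\lim_n\|x_n-p_0\|=\liminf_j\|x_{n_j}-p_0\|<\liminf_j\|x_{n_j}-p_1\|=\lim_n\|x_n-p_1\|$ and, symmetrically along $\{x_{m_k}\}$, $\lim_n\|x_n-p_1\|<\lim_n\|x_n-p_0\|$, a contradiction. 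Hence every weak subsequential limit equals a single $p^{\ast}\in Fix\,(F)$, so the bounded sequence $\{x_n\}$ converges weakly to $p^{\ast}$ and $\{(x_n,x_n)\}$ converges weakly to the coupled fixed point $(p^{\ast},p^{\ast})$.
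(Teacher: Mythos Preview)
Your proposal is correct. The Fej\'er monotonicity, asymptotic regularity, and demiclosedness steps are exactly those used (or referred to) in the paper, so up to that point the two proofs coincide. The genuine difference lies in the final step, where you must show that the bounded sequence $\{x_n\}$ has a \emph{unique} weak cluster point.

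You invoke Opial's property of Hilbert spaces: assuming two weak cluster points $p_0\neq p_1$ in $Fix\,(F)$ and using that $\lim_n\|x_n-p_i\|$ exists for $i=0,1$, Opial's inequality applied along each subsequence yields the two strict inequalities $\lim_n\|x_n-p_0\|<\lim_n\|x_n-p_1\|$ and $\lim_n\|x_n-p_1\|<\lim_n\|x_n-p_0\|$, a contradiction. The paper instead uses the \emph{asymptotic center} approach: it defines the convex lower semicontinuous functional $g(p)=\lim_n\|x_n-p\|$ on $Fix\,(F)$, shows by weak compactness that $g$ attains its infimum $d_0$, proves via a parallelogram-type computation that the minimizing set is a singleton $\{p_0\}$, and finally verifies, using the identity $\|x_{n_j}-p_0\|^2=\|x_{n_j}-p\|^2+\|p-p_0\|^2-2\langle x_{n_j}-p,\,p-p_0\rangle$, that any weak cluster point $p\in Fix\,(F)$ must satisfy $\|p-p_0\|\leq 0$.

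Both arguments are standard and ultimately rest on the same Hilbert space geometry; your Opial route is shorter and more direct, while the paper's asymptotic-center computation is more self-contained in that it does not quote Opial's lemma as a black box. Either way the conclusion is the same.
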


\begin{proof} We  essentially follow the steps and arguments of the proof of Theorem 3.4 in \cite{Ber07}. For each $p\in Fix\,(F)$ and each $n$, we have, as in the proof of Theorem \ref{th3},
\begin{equation*}
\left\| \,\,x_{n+1}-p\right\| \leq \left\| \,\,x_{n}-p\right\| ,
\end{equation*}
which shows that the function $g(p)=\underset{n\rightarrow \infty }{\lim }\left\| \,\,x_{n}-p\right\| $ is well defined and is a lower semicontinuous convex function on the nonempty convex set $Fix\,(F)$. Let
\begin{equation*}
d_{0}=\inf \{g(p):p\in Fix\,(F)\}.
\end{equation*}
For each $\varepsilon >0$, the set
\begin{equation*}
E_{\varepsilon }=\{y\,:g(y)\leq d_{0}+\varepsilon \}
\end{equation*}
is closed, convex, and, hence, weakly compact. 

Therefore $\underset{\varepsilon >0}{\cap }E_{\varepsilon
}\neq \emptyset $ (in fact
$
\underset{\varepsilon >0}{\cap }E_{\varepsilon }=\{y\,:\,g(y)=d_{0}\}\equiv
E_{0}).
$
Moreover, $F_{0}$ contains exactly one point. Indeed, since $F_{0}$ is
convex and closed, for \mbox{$p_{0},p_{1}\!\in\! F_{0}$}, and $p_{\lambda }=(1-\lambda
)p_{0}+\lambda p_{1},$%
\begin{equation*}
g^{2}(p_{\lambda })=\underset{n\rightarrow \infty }{\lim }\left\|
\,\,p_{\lambda }-x_{n}\right\| ^{2}=\underset{n\rightarrow \infty }{\lim }%
(\left\| \lambda (p_{1}-x_{n})+(1-\lambda )(p_{0}-x_{n})\right\| ^{2})=
\end{equation*}
\begin{equation*}
=\underset{n\rightarrow \infty }{\lim }(\lambda ^{2}\left\|
p_{1}-x_{n}\right\| ^{2}+(1-\lambda )^{2}\left\| \,p_{0}-x_{n}\right\| ^{2}+2\lambda (1-\lambda )\left\langle p_{1}-x_{n},p_{0}-x_{n}\right\rangle)
\end{equation*}
\begin{equation*}
=\underset{n\rightarrow
\infty }{\lim }(\lambda ^{2}\left\| p_{1}-x_{n}\right\| ^{2}+(1-\lambda )^{2}\left\| \,p_{0}-x_{n}\right\| ^{2}+2\lambda (1-\lambda
)\left\| \,p_{1}-x_{n}\right\| \cdot \left\| \,p_{0}-x_{n}\right\|)+ 
\end{equation*}
\begin{equation*}
+\underset{n\rightarrow \infty }{\lim }\left\{ 2\lambda (1-\lambda
)\,\,[\left\langle p_{1}-x_{n},p_{0}-x_{n}\right\rangle-\left\| \,\,p_{1}-x_{n}\right\| \cdot
\left\| \,\,p_{0}-x_{n}\right\| ]\right\} =
\end{equation*}
\begin{equation*}
=g^{2}(p)+\underset{n\rightarrow \infty }{\lim }\left\{ 2\lambda (1-\lambda
)\,\,\left\langle p_{1}-x_{n},p_{0}-x_{n}\right\rangle-\left\| \,\,p_{1}-x_{n}\right\| \cdot \left\|
\,\,p_{0}-x_{n}\right\| \right\} .
\end{equation*}
Hence
\begin{equation*}
\underset{n\rightarrow \infty }{\lim }\left\{ 2\lambda (1-\lambda
)\,\,[\left\langle p_{1}-x_{n},p_{0}-x_{n}\right\rangle-\left\| \,\,p_{1}-x_{n}\right\| \cdot
\left\| \,\,p_{0}-x_{n}\right\| ]\right\} =0.
\end{equation*}
Since
\begin{equation*}
\left\| p_{1}-x_{n}\right\| \rightarrow d_{0}\text{ and }\left\|
\,\,p_{0}-x_{n}\right\| \rightarrow d_{0},
\end{equation*}
the later relation implies that
\begin{equation*}
\left\| \,\,p_{1}-p_{0}\right\| ^{2}=\left\|
\,\,(p_{1}-x_{n})+(x_{n}-p_{0}\right\| ^{2}=\left\|
\,\,p_{1}-x_{n}\right\| ^{2}+\,
\end{equation*}
\begin{equation*}
+\left\| \,\,x_{n}-p_{0}\right\|
^{2}-2\,<p_{1}-x_{n},\,p_{0}-x_{n}>\rightarrow
d_{0}^{2}+d_{0}^{2}-2d_{0}^{2}=0,
\end{equation*}
giving a contradiction.

Now, in order to show that $x_{n}=F^{n}(x_{0},x_0)\rightharpoonup p_{0},$ it suffices to assume that
$x_{n_{j}}\rightharpoonup p$ for an infinite
subsequence and then prove that $p=p_{0}$. By the arguments in the proof of Theorem \ref{th4}, $%
p\in Fix\, (F)$. Considering the definition of $g$ and the fact that
$x_{n_{j}}\rightarrow p$, we have
\begin{equation*}
\left\| \,\,x_{n_{j}}-p_{0}\right\| ^{2}=\left\|
\,\,x_{n_{j}}-p+p-p_{0}\right\| ^{2}=\left\| \,\,x_{n_j}-p\right\|
^{2}+\left\| \,\,p-p_{0}\right\| ^{2}-
\end{equation*}
\begin{equation*}
-2\,\left\langle x_{n_{j}}-p,\,p-p_{0}\right\rangle\rightarrow g^{2}(p)+\left\| \,\,p-p_{0}\right\|
^{2}=g^{2}(p_{0})=d_{0}^{2}.
\end{equation*}
Since $g^{2}(p)\geq d_{0}^{2},$ the last inequality implies that
\begin{equation*}
\left\| \,\,p-p_{0}\right\| \leq 0,
\end{equation*}
which means that $p=p_{0}.$

\end{proof}

\section{Conclusions and further study}

\begin{ex} \label{ex-3}
Let $X=\mathbb{R}$ (with the usual metric), $C=[-1,1]$. Define  bivariate function $F:C^2\rightarrow C$  by  
$$
F(x,y)=-\frac{x+y}{2},\,\forall x,y\in C.  
$$
Then $F$ satisfies \eqref{nonexp-1} and is demicompact. Hence, all the assumptions of Theorem \ref{th3} are satisfied. It is easy to see that $F$ possesses a unique coupled fixed point, $(0,0)$, and the Krasnoselskij-type iteration algorithm \eqref{eq-10} yields the  sequence
$$
x_n=\left(1-2\lambda\right)^n x_0,n\geq 0.
$$
Since $-1<1-2\lambda<1$, it follows that $(x_n,x_n)$ converges to $(0,0)$ as $n\rightarrow \infty$, for any initial value $x_0$.

This shows that, for weakly nonexpansive mappings,  by using a Krasnoselskij-type iteration we can reach the convergence,  while, by means of Picard-type iterations, this cannot be obtained, in general. Indeed,  in this case, the Picard-type iteration $(u_n,u_n)$ associated with $F$ is given by $u_{n+1}=-u_n,\,n\geq 0$, which is not convergent (except for the case $u_0=0$).

\end{ex}

\begin{rem}

It is important at this stage to say that  the coupled fixed point theorems existing in literature, see \cite{Agha}-\cite{Aydi}, \cite{Ber12b}, \cite{Berzig}, \cite{Bha}, \cite{Ciric12}-\cite{Hussain}, \cite{Sabet}, \cite{Samet}-\cite{Xiao} (only a short list cited here), cannot be applied to the bivariate functions in Examples \ref{ex-2} and \ref{ex-3}.

Finally, let us note that the double sequence $\{(x_n,y_n)\}$, defined for each component by a formula of the form  \eqref{eq-10} with $F(x_n,y_n)$ and $F(y_n,x_n)$, respectively, instead of $F(x_n,x_n)$, in the  case of the function $F$ in Example \ref{ex-2} will be given by
$$
x_{n}=\frac{1}{2}\left[(1-\lambda)^n(x_0-y_0)+(1-2\lambda)^{n}(x_0+y_0)\right],\,n\geq 0,
$$
$$
y_{n}=\frac{1}{2}\left[(1-\lambda)^n(y_0-x_0)+(1-2\lambda)^{n}(x_0+y_0)\right],\,n\geq 0,
$$
and it is easily seen that $\{(x_n,y_n)\}$ still converges to $(0,0)$,  the unique coupled fixed point of $F$, for all $x_0,y_0\in C$. 

This  also indicates that it is not necessary to consider only the case of a double sequence with equal components  $\{(x_n,x_n)\}$  in Theorems \ref{th4}-\ref{th6} (but the proof of a convergence theorem for such an iterative method will be essentially different from the one given in this paper).
\end{rem}

To conclude this paper, we note that, for the general case of a weakly nonexpansive bivariate mapping $F$, the Picard-type iteration process \eqref{dublu} does not generally converge or, even if it converges, its limit is not a coupled fixed point of $F$, but the Krasnoselskij type iteration process always converges to a coupled fixed point of $F$.

In the same way, we can prove convergence theorems for iterative methods of Krasnoselskij type for tripled fixed points, quadruple fixed points etc. of weakly nonexpansive mappings (see \cite{Ber11}-\cite{Ber12a}, \cite{Ber11a}, \cite{Ber12d},\cite{BerK}, \cite{Chid09}, \cite{Chid10}, \cite{Kar}-\cite{MarM}, \cite{Pac-teza}, \cite{Pac12}, \cite{Raf}-\cite{RusFPT}, and references therein). 

A similar approach for other contractive conditions and   algorithms existing in literature (see \cite{Ber07}, \cite{Chid09}) will be considered in our future work.

\section*{Acknowledgements}

The paper has been finalized during the first author's visit of Department of Mathematics and Statistics, King Fahd University of Petroleum and Minerals. He gratefully thanks the host for kind hospitality and excellent work facilities offered.
The first and third authors' research was supported by the Grants PN-II-RU-TE-2011-3-239 and PN-II-ID-PCE-2011-3-0087 of the Romanian Ministry of Education
and Research. 

The second author is grateful to KACST, Riyad, for supporting research project J-P-11-0623.

\vskip 0.5 cm {\it Department of Mathematics and Computer Science

North University Center at Baia Mare

Technical University of Cluj-Napoca

Victorie1 76, 430072 Baia Mare ROMANIA

E-mail: vasile\_berinde@yahoo.com }

\vskip 0.5 cm {\it Department of Mathematics ans Statistics

King Fahd University of Petroleum and Minerals

Dhahran, Saudi Arabia

E-mail: arahim@kfupm.edu.sa}

\vskip 0.5 cm {\it Department of Analysis, Forecast and Mathematics

Faculty of Economics and Bussiness Administration

Babe\c s-Bolyai University of Cluj-Napoca

56-60 T. Mihali St., 400591 Cluj-Napoca ROMANIA

E-mail: madalina\_pacurar@yahoo.com}


\begin{thebibliography}{99}

\bibitem{Abbas} Abbas, M., Khan, A.R. and Nazir, T., \textit{Coupled common fixed point results in two generalized metric spaces}, Appl. Math. Comput. 217 (2011), 6328-6336

\bibitem{Agha} Aghajani,A.  and Arab, R., \textit{Fixed points of $(\psi,\varphi,\theta)$-contractivemappings in partially ordered $b$-metric spaces and application to quadratic integral equations}, Fixed Point Theory Appl. \textbf{2013}, 2013:245  doi:10.1186/1687-1812-2013-245


\bibitem{Agha12} Aghajani, A., Abbas, M. and Kallehbasti, E. P., \textit{Coupled fixed point theorems in partially ordered metric spaces and application},
Math. Commun.,  17 (2002), No. 2, 497--509

\bibitem{Algham} Alghamdi, M. A., Hussain, N.  and Salimi, P., \textit{Fixed point and coupled fixed point theorems on $b$-metric-like spaces}, J. Ineq. Appl.  \textbf{2013}, 2013:402  doi:10.1186/1029-242X-2013-402

\bibitem{Amini} Amini-Harandi, A., \textit{Coupled and tripled fixed point theory in partially ordered metric spaces with application to initial value problem}, Math. Comput. Model., \textbf{57} (2013), Nos 9--10, 2343--2348

\bibitem{Aydi} Aydi, H. ,  Samet, B.and Vetro, C. \textit{Coupled fixed point results in cone metric spaces for $\tilde{w}$-compatible mappings}, Fixed Point Theory Appl. \textbf{2011}, 2011:27  doi:10.1186/1687-1812-2011-27

\bibitem{Ber07} Berinde, V., \emph{Iterative Approximation of Fixed Points}, Springer, Berlin Heidelberg New York, 2007



\bibitem{Ber11}  Berinde, V., \textit{Generalized coupled fixed point theorems for mixed monotone mappings in partially ordered metric spaces}, Nonlinear Anal. \textbf{74} (2011), no. 18, 7347-7355

\bibitem{Ber12a} Berinde, Vasile. \textit{Coupled coincidence point theorems for mixed monotone nonlinear operators}. Comput. Math. Appl. 64 (2012), no. 6, 1770--1777

\bibitem{Ber12b} Berinde, Vasile. \textit{Coupled fixed point theorems for $\varphi$-contractive mixed monotone mappings in partially ordered metric spaces}. Nonlinear Anal. 75 (2012), no. 6, 3218--3228

\bibitem{Ber13} Berinde, V., \textit{Convergence theorems for fixed point iterative methods defined as admissible perturbations of a nonlinear operator}, Carpathian J. Math., {\bf 29} (2013), No. 1, 9--18


\bibitem{Ber11a}  Berinde, V. and  Borcut, M., \textit{Tripled fixed point theorems for contractive type mappings in partially ordered metric spaces}, Nonlinear Anal.  \textbf{74} (2011) 4889-4897

\bibitem {BerMMN} Berinde, V., Khan, A.R. and P\u acurar, M., \textit{Convergence theorems for admissible perturbations of pseudocontractive operators}, Miskolc Math. Notes, 15 (2014), No. 1 (in print)

\bibitem {BerK} Berinde, V. and Kovacs, Gabriella, \textit{Stabilizing discrete dynamical systems by monotone Krasnoselskij type iterative schemes}, Creat. Math. Inform., {\bf 17} (2008), No. 3, 298--307 

\bibitem{Ber12c} Berinde, V. and P\u acurar, M., \textit{Coupled fixed point theorems for generalized symmetric Meir-Keeler contractions in ordered metric spaces}. Fixed Point Theory Appl. 2012, 2012:115, 11 pp.


\bibitem{Berzig} Berzig, M. and Samet, B., \textit{An extension of coupled fixed point's concept in higher dimension and applications}, Comput. Math. Appl.,  63 (2012), No.  8, 1319--1334

\bibitem{Ber12d} Borcut, M. and Berinde, V., \textit{Tripled coincidence theorems for contractive type mappings in partially ordered metric spaces}. Appl. Math. Comput. 218 (2012), no. 10, 5929--5936



\bibitem{Bha}  Bhaskar, T. G., Lakshmikantham, V., \textit{Fixed point theorems in partially ordered metric spaces and applications}, Nonlinear Anal. \textbf{65} (2006), no. 7, 1379--1393 


\bibitem{Brow67b} Browder, F. E. and Petryshyn, W. V., \emph{Construction of fixed points of nonlinear mappings in Hilbert space}, J. Math. Anal. Appl., \textbf{20} (1967), No. 2, 197--228

\bibitem{Chid09} Chidume, C. E., \emph{Geometric Properties of Banach Spaces and Nonlinear Iteration}, Springer, Berlin Heidelberg New York, 2009

\bibitem{Chid10}  Chidume C.E. and M\u aru\c ster, \c St., \textit{Iterative methods for the computation of fixed points
of demicontractive mappings}, J. Comput. Appl. Math., \textbf{234} (2010), no. 3, 861--882

\bibitem{Ciric12} \' Ciri\' c, L., Damjanovi\' c, B. , Jleli, M.  and B. Samet, \textit{Coupled fixed point theorems for generalized Mizoguchi-Takahashi contractions with
applications}, Fixed Point Theory Appl. 2012, 2012:51  doi:10.1186/1687-1812-2012-51

\bibitem{Ibn} Ibn Dehaish, B. A., Khamsi, M. A. and Khan, A. R., Mann iteration process for asymptotic pointwise nonexpansive mappings in metric spaces, J. Math. Anal. Appl. 397 (2013), no. 2, 861--868 

\bibitem{Gu} Gu, F. and Yin, Y., \textit{A new common coupled fixed point theorem in generalized metric space and applications to integral equations}, Fixed Point Theory Appl. \textbf{2013}, 2013:266  doi:10.1186/1687-1812-2013-266

\bibitem{Hussain} Hussain, N., Salimi, P.  and Al-Mezel, S., \textit{Coupled fixed point results on quasi-Banach spaces with application to a system of integral equations}, Fixed Point Theory Appl. \textbf{2013}, 2013:261  doi:10.1186/1687-1812-2013-261

\bibitem{Ish}  Ishikawa, S. , \textit{Fixed points and iterations of a nonexpansive mapping in a Banach space},
Proc. Amer. Math. Soc., \textbf{59} (1976), 65--71

\bibitem{Kar}  Karapinar, Erdal; Berinde, Vasile. Quadruple fixed point theorems for nonlinear contractions in partially ordered metric spaces. Banach J. Math. Anal. 6 (2012), no. 1, 74--89

\bibitem{Khan13a}  Khan, A. R., \textit{Common fixed points and solutions of nonlinear functional equations}, Fixed Point Theory Appl. 2013, 2013: 290


\bibitem{Khan13b}  Khan, A. R., Abbas, M. and Ali, B., \textit{Tripled coincidence and common fixed point theorems for hybrid pair of mappings}, Creat. Math. Inform. 22 (2013), No. 1, 53-6-4

\bibitem{Kra}  Krasnoselskij, M. A., \textit{Two remarks on the method of successive approximations} (in Russian), Uspehi Mat. Nauk., {\bf 10} (1955), No. 1 (63), 123–-127


\bibitem {Mann} Mann, W. R., \emph{Mean value methods in iteration}, Proc. Amer. Math. Soc., \textbf{44} (1953), 506--510

\bibitem {MarM} Marinescu, Dan \c St. and Monea, M., \textit{About Krasnoselskij iterative method}, Creat. Math. Inform., {\bf 22} (2013), No. 2, 199-206

\bibitem {Ola} Olaoluwa, H., Olaleru, J.O. and Chang, S.S., \textit{Coupled fixed point theorems for asymptotically nonexpansive mappings}, Fixed Point Theory Appl. 2013, 2013: 68

\bibitem {Op75} Opoitsev, V. I., \textit{Dynamics of collective behavior. III. Heterogenic systems}. (Russian) ; translated from Avtomat. i Telemeh. 1975, , no. 1, 124--138 Automat. Remote Control 36 (1975), no. 1, 111--124.


\bibitem {Op84} Opoitsev, V. I. and Khurodze, T. A.,  \textit{Nelineinye operatory v prostranstvakh s konusom}. (Russian) [Nonlinear operators in spaces with a cone] Tbilis. Gos. Univ., Tbilisi, 1984. 271 pp.

\bibitem {Op86} Opoitsev, V. I., \textit{Nelineinaya sistemostatika. [Nonlinear systemostatics]} Ekonomiko-Matematicheskaya Biblioteka [Library of Mathematical Economics], 31. Nauka, Moscow, 1986. 248 pp.

\bibitem {Pac-teza} P\u acurar, M\u ad\u alina, \textit{Iterative Methods for Fixed Point Approximation}, Risoprint, Cluj-Napoca, 2010

\bibitem {Pac12} P\u acurar, M\u ad\u alina, \textit{Common fixed points for almost Presi\' c type operators}, Carpathian J. Math., \textbf{28} (2012), No. 1, 117--126

\bibitem{Pet66} Petryshyn, W. V., \emph{Construction of fixed points of demicompact mappings in Hilbert space}, J. Math. Anal. Appl., \textbf{14} (1966), no. 2, 276--284

\bibitem{Pet73} Petryshyn, W. V. and Williamson, T. E., Jr., \textit{Strong and weak convergence of the sequence of successive approximations for quasi-nonexpansive mappings}, J. Math. Anal. Appl., \textbf{43} (1973), no. 2, 459--497

\bibitem{Raf} Rafiq A. and Acu, Ana Maria, \textit{A new implicit iteration process for two strongly pseudocontractive mappings}, Creat. Math. Inform., {\bf 21} (2012), No. 2, 197--201

\bibitem{RusCJM} Rus, I. A., \emph{Some properties of the solutions of those equations for which the Krasnoselskii iteration converges}, Carpathian J. Math., \textbf{28} (2012), No. 2, 329--336



    
\bibitem{RusFPT} Rus, I. A., \emph{An abstract point of view on iterative approximation of fixed points}, Fixed Point Theory,  \textbf{13} (2012), No. 1, 179--192

\bibitem{Sabet} Sabetghadam, F, Masiha, H. P. and Sanatpour, A.H., \textit{Some coupled fixed point theorems in cone metric spaces}, Fixed Point Theory Appl. 2009, Article ID 125426

\bibitem{Samet} Samet, B., Vetro, C. and Vetro, P., \textit{Fixed point theorems for alpha-psi-contractive type mappings}, Nonlinear Anal.  \textbf{75} (2012), No. 4,  2154--2165   Published: MAR 2012

\bibitem{Sang} Sang, Y., \textit{A class of $\varphi$-concave operators and applications}, Fixed Point Theory Appl. \textbf{2013}, 2013:274  doi:10.1186/1687-1812-2013-274

\bibitem{Shat} Shatanawi, W., Samet, B. and Abbas, M., \textit{Coupled fixed point theorems for mixed monotone mappings in ordered partial metric spaces}, Math. Comput. Model. 55 (2012), Nos 3-4, 680--687


\bibitem{Sint} Sintunavarat, W., Kumam, P. and Cho, Y. J., \textit{Coupled fixed point theorems for nonlinear contractions without mixed monotone property}, Fixed Point Theory Appl. \textbf{2012}, 2012:170  doi:10.1186/1687-1812-2012-170

\bibitem{Urs} Urs, C., \textit{Coupled fixed point theorems and applications to periodic boundary value problems}, Miskolc Math. Notes, \textbf{14} (2013), no. 1,   323--333

\bibitem{Wu} Wu, J.  and Liu, Y., \textit{Fixed point theorems for monotone operators and applications to nonlinear elliptic problems}, Fixed Point Theory Appl. \textbf{2013}, 2013:134  doi:10.1186/1687-1812-2013-134

\bibitem{Xiao} Xiao, J.-Z.,  Zhu, X.-H. and Shen, Z.-M., \textit{Common coupled fixed point results for hybrid nonlinear contractions in metric spaces}, Fixed Point Theory
\textbf{ 14} (2013), No. 1,  235--249 




\end{thebibliography}
\end{document}